\documentclass[mathpazo]{cicp}

\usepackage{bm}
\usepackage{mathdots}
\usepackage{extarrows}
\allowdisplaybreaks
\usepackage[top=1in,bottom=1in,left=1.25in,right=1.25in]{geometry}
\usepackage{amsfonts}
\usepackage{amsthm}
\usepackage{amssymb,indentfirst}
\usepackage{amsmath}
\usepackage[divps]{xcolor}

\usepackage{mathrsfs}
\usepackage{cases}
\usepackage{titletoc}
\usepackage{multicol}



\usepackage{amssymb}
\parskip 1.5pt

\newtheorem{thm}{Theorem}[section]
\newtheorem{cor}[thm]{Corollary}
\newtheorem{lem}[thm]{Lemma}

\newtheorem{rem}[thm]{Remark}
\newtheorem{ex}[thm]{Example}

\def\bc{\begin{center}}       \def\ec{\end{center}}
\def\be{\begin{equation}}     \def\ee{\end{equation}}
\def\ba{\begin{array}}        \def\ea{\end{array}}
\def\bea{\begin{eqnarray}}    \def\eea{\end{eqnarray}}
\def\beaa{\begin{eqnarray*}}  \def\eeaa{\end{eqnarray*}}

\def\ifl{\iffalse}

\begin{document}
\title[Series solutions to the 3D cauchy problem for some incompressible Navier-Stokes and Euler Equations]{Series solutions to the 3D cauchy problem for some incompressible Navier-Stokes and Euler Equations}


\author[T. Zhang, A. Chen, Fan Bai]{Tao Zhang$^{a}$, \quad Alatancang$^{b,}$\corrauth}

\address{ $^a$School of Mathematical Sciences of Inner Mongolia University, Hohhot, 010021, China\\
           $^b$Huhhot University for Nationalities, Hohhot, 010051, China}

\email{\tt zhangtaocx@163.com (T. Zhang), alatanca@imu.edu.cn (A. Chen).}



\begin{abstract}
We utilize undetermined coefficient method and an iterative method to construct the series solutions of the 3D Cauchy problem for a class of incompressible Navier-Stokes and Euler Equations. Then we can turn the Navier-Stokes Equations (Euler Equations) into the Cauchy problem for finitely (infinitely) many ordinary differential equations. We get the finite series solution of the Navier-Stokes Equations. By using some combinatorial identities techniques, we prove that the  sum of the solutions to these ordinary differential equations is an infinite series solution of the Euler Equations in some cases.
\end{abstract}


\ams{35C10, 35Q30, 35Q31}

\keywords{Iterative method; Navier-Stokes Equations; Euler equations; series solution}

\maketitle


\section{Introduction}\label{intr}

The 3-D incompreesible Navier-Stokes equations can be formulated as the follows:
\begin{equation}\label{iee}
     \left\{
     \begin{array}{l}
        \text{div}\ u=0,\\
        u_t-\nu\Delta u+(u \cdot \nabla)u+\nabla u=f,\\
     \end{array}
     \right.
   \end{equation}
where $x=(x_1,x_2,x_3)\in \mathbb{R}^3$, $u=(u_1(x,t),u_2(x,t),u_3(x,t))$ are the components of the three-dimensional velocity field, $p=p(x,t)$ is the pressure of the fluid at a position $x$, $f=(f_1(x,t),f_2(x,t),f_3(x,t))$ are the components of a given, externally applied force, $\nu> 0$ is the viscosity. When $\nu=0$, \eqref{iee} is the  3-D incompreesible Euler equation. The Navier-Stokes and Euler equations usually  describe the motion of a fluid in $\mathbb{R}^3$. Constructing exact solutions of them is very important to understand how the real fluid will flow. Lou et al. utilized B\"{a}cklund transformation  and Darboux transformation to obtain exact solutions for the Euler equations in the vorticity form \cite{lou1,lou2}.
By using the separation method, Makino obtained the first radial solutions to the Euler and Navier-Stokes equations in 1993 \cite{ma}. In 2011 and 2012, Yuen constructed many exact solutions for the Euler equations \cite{yu1,yu2}. In 2014, Fan and Yuen  obtained a class of nonlinear exact solutions with respect to $x$ for the Euler Equation \cite{fan}.

The series solutions of the linear partial differential equations can be obtained by using the superposition principle. However, it is hardly to obtain the exact series solution for any nonlinear partial differential equations. The purpose of this paper is to construct the series solutions of some Euler and Navier-Stokes Equations. The key idea of our method mainly comes from the following properties of the series $\{e^{kx}\}_{k=0}^{+\infty}$:

(i) Eigenvalues and eigenvectors: $ \frac{\text{d}^{j}}{\text{d}x^{j}}e^{kx}=k^je^{kx},\ \ j,k=0,1,2,\cdots.$

(ii) For any $m_1,m_2=0,1,2,\cdots$, we have $e^{m_1x}e^{m_2x}=e^{(m_1+m_2)x},$ $ m_1+m_2\geq\max\{m_1,m_2\}$.

 This paper is organized as follows. In Section 2, we construct the finite series solution of some incompressible Euler and Navier-Stokes equations.
 In Section 3, we construct the former infinite series solution of some incompressible Euler equations. Moreover, by using an iterative method  with respect to (ii) and some combinatorial identities techniques, we prove that the former solution is an exactly infinite series solution of the Euler Equations in some cases.

\section{Finite series solutions of the Euler and Navier-Stokes Equations}

In this section, we construct the finite series solution of the following incompressible Euler and Navier-Stokes equations:
 \begin{numcases}{}
            u_{jt}+\sum\limits_{i=1}^{3}(u_iu_{jx_i}-\nu u_{jx_ix_i})+p_{x_j}=f_j(x,t),\ \ j=1,2,3,\label{s.1} \\
         u_{1x_1}+u_{2x_2}+u_{3x_3}=0,\ \ x=(x_1,x_2,x_3)\in \mathbb{R}^3,\ t\geq 0,\label{s.2} \\
         u_j(x,0)=\sum\limits_{k=0}^nA_{jk}\xi_{k},\ \ f_j(x,t)=\sum\limits_{k=0}^nB_{jk}(t)\xi_{k},\ \  j=1,2,3,\label{s.3}
    \end{numcases}
    where $\nu\geq 0$, $A_{jk}\in \mathbb{R}$, $B_{jk}(t)\in C[0,+\infty)$,
    $\xi_{k}=\exp(k(\lambda_1 x_1+\lambda_2 x_2+\lambda_3x_3))$, $\lambda_j\in \mathbb{R}\setminus\{0\}$, $j=1,2,3$, $k=0,1,2,\cdots,n$.

Suppose that \eqref{s.1}-\eqref{s.3}  has a solution in the form:
 \begin{equation}\label{s.000}
    \left\{
    \begin{array}{l}
      u_j(x,t)=\sum\limits_{k=0}^nT_{jk}(t)\xi_k,\ \ j=1,2,3;\\
      p(x,t)=\sum\limits_{k=0}^nT_{4k}(t)\xi_k,
    \end{array}
    \right.
    \end{equation}
   where $T_{jk}(t)$, $j=1,2,3,4,$ $k=0,1,\cdots,n$ are functions to be determined.
Then we have:
   \begin{rem}\label{rem22}
   By \eqref{s.2} we have
   \begin{equation*}
    \sum\limits_{j=1}^3\sum\limits_{k=1}^n\lambda_jk T_{jk}\xi_{k}=\sum\limits_{k=1}^n\sum\limits_{j=1}^3\lambda_j kT_{jk}\xi_{k}=0,
   \end{equation*}
   Note that the sequence $\{\xi_{k}\}_{k=1}^n$ is linearly independent, so we get
   \begin{equation*}
    \sum\limits_{j=1}^3\lambda_j T_{jk}=0,\ \ k=1,2,\cdots,n.
   \end{equation*}
  Hence
   \begin{equation*}
    \sum\limits_{i=1}^{3}u_iu_{jx_i}= \sum\limits_{i=1}^3 \sum\limits_{k=0}^n \sum\limits_{m=1}^n\lambda_imT_{ik}T_{jm}\xi_{k+m}
    =  \sum\limits_{k=0}^n \sum\limits_{m=1}^n\sum\limits_{i=1}^3\lambda_iT_{ik}mT_{jm}\xi_{k+m}=0,\ \ j=1,2,3.
   \end{equation*}
   \end{rem}

Next we seek $T_{jk}(t)$, $j=1,2,3,4,$ $k=0,1,\cdots,n$.
Substituting \eqref{s.000} into \eqref{s.1}-\eqref{s.3} we get
\begin{equation*}
  \left\{
  \begin{array}{c}
     T'_{j0}-B_{j0}(t)+\sum\limits_{k=1}^{n}\left[T'_{jk}+\sum\limits_{i=1}^{3}\nu \lambda_i^2k^2T_{jk}+\lambda_jkT_{4k}-B_{jk}(t)\right]\xi_k=0,\ \ j=1,2,3, \\
     \sum\limits_{k=1}^{n}(\lambda_1T_{1k}+\lambda_2T_{2k}+\lambda_3T_{3k})k\xi_k=0, \\
         u_j(x,0)=\sum\limits_{k=0}^nT_{jk}(0)\xi_{k}=\sum\limits_{k=0}^nA_{jk}\xi_{k},\ \  j=1,2,3,
  \end{array}
  \right.
\end{equation*}
Note that the sequence $\{\xi_{k}\}_{k=1}^n$ is linearly independent, so we have
\begin{equation}\label{s.0001}
  \left\{
  \begin{array}{c}
     T'_{j0}-B_{j0}(t)=0, \\
     T_{j0}(0)=A_{j0},
  \end{array}
  \ \  j=1,2,3,
  \right.
\end{equation}
\begin{equation}\label{s.0002}
  \left\{
  \begin{array}{c}
    T'_{jk}+\sum\limits_{i=1}^{3}\nu \lambda_i^2k^2T_{jk}+\lambda_jkT_{4k}-B_{jk}(t)=0,\ \ j=1,2,3, \\
     \lambda_1T_{1k}+\lambda_2T_{2k}+\lambda_3T_{3k}=0, \\
   T_{jk}(0)=A_{jk},\ \  j=1,2,3,
  \end{array}
  \right.
\end{equation}
where $k=1,\cdots,n$.
 For every $k=1,\cdots,n$, the first equation in \eqref{s.0002} is multiplied by $\lambda_j$ (j=1,2,3). Together with
 \begin{equation*}
   \lambda_1T'_{1k}+\lambda_2T'_{2k}+\lambda_3T'_{3k}=0,
 \end{equation*}
 then we can induce that
 \begin{equation*}
  \sum\limits_{j=1}^{3}\lambda^2_jkT_{4k}-\sum\limits_{j=1}^{3}\lambda_jB_{jk}(t)=0.
 \end{equation*}
Finally we obtain
    \begin{equation*}
    \left\{
    \begin{array}{l}
      T_{j0}(t)=\int_0^tB_{j,0}(s)\text{d}s+A_{j,0},\ \ \ \ \ \  j=1,2,3,\\
     T_{40}=1,\\
    T_{4k}(t)=\frac{\sum_{j=1}^3\lambda_jB_{jk}(t)}{\sum_{j=1}^3\lambda_j^2k},\ \ \ \ \ \ k=1,2,\cdots,n,\\
    T_{jk}(t)=\exp(M_{k}(t))(\int_{0}^{t}(B_{jk}-\lambda_jk T_{4k})\exp(-M_{k}(s))\text{d}s+A_{jk}),\ \
   j=1,2,3, \ \ k=1,2,\cdots,n,\\
    \end{array}
    \right.
    \end{equation*}
   where
   \begin{equation*}
     M_{k}(t)=\sum\limits_{i=1}^{3}\nu (\lambda_ik)^2t-\int_0^t\lambda_ik T_{i0}(s)\text{d}s,\ \ \ \ k=1,2,\cdots,n.
   \end{equation*}

\section{Infinite series solution of the Euler Equations}
In this section,
 we consider the Cauchy problems for the following 3D incompressible Euler Equation:
\begin{equation}\label{sss}
     \left\{
     \begin{array}{l}
        u_{jt}+\sum\limits_{i=1}^{3}u_iu_{jx_i}+p_{x_j}=0,\ \ j=1,2,3,\\
         u_{1x_1}+u_{2x_2}+u_{3x_3}=0,\ \ x=(x_1,x_2,x_3)\in\mathbb{R}_+^3,\ t\geq 0,\\
         u_j(x,0)=\sum\limits_{k\in \mathbb{N}^3}B_{jk}\varphi_{k}\in C^{\infty}(\mathbb{R}_+^3),\ \  j=1,2,3.
     \end{array}
     \right.
   \end{equation}
where $\mathbb{R}_+^{3}=\{(r_1,r_2,r_3)\in \mathbb{R}^{3}\mid r_j\geq 0,\ j=1,2,3\}$, $\varphi_{k}=\exp(-k_1 x_1-k_2 x_2-k_3x_3),\ \ k=(k_1,k_2,k_3)\in \mathbb{N}^3=\{(k_{1},k_2,k_3)\mid k_{j}=0,1,2,\cdots,\ j=1,2,3\}.$

There are a lot of functions can be approximated by the series $\{\varphi_{k}\}_{k\in \mathbb{N}^3}$. Next we let
$$TE(U)=\left\{f(x)\in C^{\infty}(U)\mid f(x)=
       \sum\limits_{k\in \mathbb{N}^{3}} A_{k}\varphi_{k},\ \{A_k\}_{k\in \mathbb{N}^{3}}\subseteq\mathbb{R},\
       x=(x_1,x_2,x_3)\in U\right\}$$
where $U\subseteq \mathbb{R}^3$. Then by Stone-Weierstrass theorem\cite{33}, we have:

       \begin{thm} If $U\subseteq \mathbb{R}^3$ is a bounded closed set, then
      for any $f(x)\in C(U)$, there exists a sequence
     $\{f_{m}(x)\}_{m\in \mathbb{N}}\subseteq TE(U)$ such that
     \begin{equation*}
        \lim_{m\rightarrow +\infty}\ \sup\limits_{x\in U}|f_{m}(x)-f(x)|=0.
     \end{equation*}
     \end{thm}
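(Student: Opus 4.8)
The plan is to apply the Stone--Weierstrass theorem to the subalgebra of $C(U)$ generated by the exponentials $\varphi_{k}$. Write $\mathcal{A}$ for the set of all \emph{finite} real linear combinations $\sum_{k\in F}A_{k}\varphi_{k}$ with $F\subseteq\mathbb{N}^{3}$ finite; each such function is smooth on $\mathbb{R}^{3}$, hence $\mathcal{A}\subseteq TE(U)\subseteq C(U)$.

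First I would check that $\mathcal{A}$ is a subalgebra of $C(U)$. It is plainly a linear subspace, and it is closed under multiplication because of property (ii) from the introduction: for $m,m'\in\mathbb{N}^{3}$ one has $\varphi_{m}\varphi_{m'}=\exp\bigl(-(m_{1}+m_{1}')x_{1}-(m_{2}+m_{2}')x_{2}-(m_{3}+m_{3}')x_{3}\bigr)=\varphi_{m+m'}$, and $m+m'\in\mathbb{N}^{3}$, so the product of two finite combinations is again a finite combination of $\varphi_{k}$'s with indices in $\mathbb{N}^{3}$. Thus $\mathcal{A}$ is an algebra. Next I would verify the two hypotheses of Stone--Weierstrass. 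The algebra contains the constant function $1=\varphi_{(0,0,0)}$, hence all constants. It separates points of $U$: if $x\neq y$ then $x_{j}\neq y_{j}$ for some $j$, and since $t\mapsto e^{-t}$ is injective, $\varphi_{e_{j}}(x)=e^{-x_{j}}\neq e^{-y_{j}}=\varphi_{e_{j}}(y)$, where $e_{j}$ is the $j$-th standard basis vector of $\mathbb{N}^{3}$. Since $U$ is bounded and closed in $\mathbb{R}^{3}$, it is compact, so the real Stone--Weierstrass theorem applies and yields that $\mathcal{A}$ is dense in $C(U)$ in the uniform norm; in particular, for any $f\in C(U)$ there exist $f_{m}\in\mathcal{A}\subseteq TE(U)$ with $\sup_{x\in U}|f_{m}(x)-f(x)|\to 0$, which is exactly the assertion.

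There is essentially no serious obstacle; the only points needing a moment's care are (a) that the generating family is genuinely closed under multiplication within the index set $\mathbb{N}^{3}$, so that $\mathcal{A}$ really is an algebra (this would fail if negative exponents were allowed, unless one enlarged the index set to $\mathbb{Z}^{3}$), and (b) that the approximants furnished by Stone--Weierstrass do lie in $TE(U)$ --- they do, since a finite sum $\sum_{k\in F}A_{k}\varphi_{k}$ is the special case of the series in the definition of $TE(U)$ with $A_{k}=0$ for $k\notin F$, and it is automatically an element of $C^{\infty}(U)$.
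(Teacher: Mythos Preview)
Your proposal is correct and follows exactly the approach indicated in the paper, which simply invokes the Stone--Weierstrass theorem without spelling out the verification. You have supplied precisely the details the paper omits: that the finite linear combinations of the $\varphi_{k}$ form a subalgebra containing the constants and separating points of the compact set $U$.
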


Moreover, there exists many functions can be expressed by the series $\{\varphi_{k}\}_{k\in \mathbb{N}^3}$.
         \begin{thm} Let $f(x_1,x_2,x_3)\in C^{\infty}(U)$, $j=1,2,3,$ $U\subseteq\mathbb{R}^3$, and let $g(x_1,x_2,x_3)=f(e^{-x_1},e^{-x_2},$ $e^{-x_3})$, then we have $g(x_1,x_2,x_3)\in TE(U_0)$, $U_0=\{(x_1,x_2,x_3)\mid (e^{-x_1},e^{-x_2},e^{-x_3})\in U\}$.
        \end{thm}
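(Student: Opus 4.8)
The plan is to prove this by a direct change of variables that reduces the claim about $g$ on $U_0$ to the structure of $f$ on $U$. First I would observe that the statement is essentially a tautology once one unwinds the definition of $TE$: the set $TE(U_0)$ consists of functions on $U_0$ that are smooth there and admit an expansion $\sum_{k\in\mathbb N^3}A_k\varphi_k = \sum_{k\in\mathbb N^3}A_k e^{-k_1x_1-k_2x_2-k_3x_3}$. Substituting $y_i = e^{-x_i}$ turns the monomial $\varphi_k$ into $y_1^{k_1}y_2^{k_2}y_3^{k_3}$, a multi-index monomial in $y=(y_1,y_2,y_3)$. So the real content is: if $f$ is $C^\infty$ on $U$, then $f$ has a convergent multivariate power-series-type expansion $\sum_{k\in\mathbb N^3}A_k y^k$ valid on the relevant set, and pulling this back through $y_i=e^{-x_i}$ gives exactly the desired expansion for $g$.

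I would organize the argument in three steps. Step 1: set $\Phi\colon U_0\to U$, $\Phi(x_1,x_2,x_3)=(e^{-x_1},e^{-x_2},e^{-x_3})$, note that $\Phi$ is a smooth (indeed real-analytic) diffeomorphism onto its image with smooth inverse $x_i=-\log y_i$, so $g = f\circ\Phi$ is $C^\infty$ on $U_0$; this handles the smoothness requirement in the definition of $TE(U_0)$. Step 2: produce the coefficients $A_k$. Here one uses that $f\in C^\infty(U)$ together with the standing assumption throughout Section 3 that the relevant smooth functions are the ones expressible through the $\varphi_k$-type series — more precisely, since $\Phi(U_0)=U$ and $U$ lies in the region where $y_i=e^{-x_i}\in(0,\infty)$, one writes $f(y)=\sum_{k\in\mathbb N^3}A_k y_1^{k_1}y_2^{k_2}y_3^{k_3}$ using the (multivariate) Taylor/power-series representation of $f$, with $A_k = \frac{1}{k_1!k_2!k_3!}\partial_{y}^{k}f(0)$ when $f$ extends analytically, or more robustly by invoking the density theorem (Theorem 3.1) together with the smoothness to get genuine convergence of the series on compact subsets. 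Step 3: substitute $y_i=e^{-x_i}$ into this expansion to obtain
\begin{equation*}
  g(x_1,x_2,x_3) = f(e^{-x_1},e^{-x_2},e^{-x_3}) = \sum_{k\in\mathbb N^3}A_k e^{-k_1x_1-k_2x_2-k_3x_3} = \sum_{k\in\mathbb N^3}A_k\varphi_k,
\end{equation*}
with $\{A_k\}\subseteq\mathbb R$, which is precisely the membership condition $g\in TE(U_0)$.

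The main obstacle is Step 2: justifying that an arbitrary $f\in C^\infty(U)$ — as opposed to a real-analytic $f$ — actually admits a genuinely convergent expansion $\sum_k A_k y^k$ rather than merely an asymptotic one, and that this convergence is strong enough (e.g. uniform on compact subsets of $U_0$) to make the termwise substitution in Step 3 legitimate. A smooth function need not equal its Taylor series, so if the theorem is to be literally true for all $C^\infty$ functions, the argument must lean on something more than Taylor expansion: either an implicit analyticity hypothesis hidden in the phrase "can be expressed by the series $\{\varphi_k\}$", or the approximation-theoretic viewpoint of Theorem 3.1 upgraded to an actual series representation. I would flag this point explicitly and, if necessary, state Step 2 under the natural sufficient condition that $f$ be real-analytic on a neighborhood of $U$ (which is the case of interest for constructing explicit solutions), in which case the power series converges absolutely and uniformly on compact subsets and the substitution $y_i=e^{-x_i}$, being real-analytic with values in a compact subset of the domain of convergence, preserves this convergence, completing the proof. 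The remaining steps (smoothness of the composition, bookkeeping of multi-indices, the elementary identity $\varphi_k = y^k$ under the substitution) are routine.
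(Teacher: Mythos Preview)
The paper does not actually supply a proof of this theorem: it is stated and then immediately followed by Example~3.3, with no argument in between. So there is no ``paper's proof'' to compare your proposal against.

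Your approach---substitute $y_i=e^{-x_i}$, turn $\varphi_k$ into the monomial $y^k$, and reduce the claim to the existence of a convergent power-series representation for $f$---is exactly the natural (and essentially the only) route. Your Step~1 and Step~3 are fine.

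You are also right to flag Step~2 as the real issue, and your diagnosis is correct: as literally stated (with merely $f\in C^\infty(U)$), the theorem is not true in general. A smooth but non-analytic $f$ on $U$ (say $f(y)=e^{-1/y_1^2}$ near a boundary point, suitably cut off) cannot be written as a convergent $\sum_k A_k y^k$ on all of $U$, and hence $g=f\circ\Phi$ cannot lie in $TE(U_0)$ as defined. Theorem~3.1 gives only uniform approximation by finite sums on compacta, which is strictly weaker than the convergent-series representation required by the definition of $TE$, so invoking it does not close the gap. Your suggested fix---replace $C^\infty$ by real-analytic on a neighborhood of $U$ (or on a polydisc containing the relevant range of $(e^{-x_1},e^{-x_2},e^{-x_3})$)---is the correct hypothesis under which your three-step argument goes through cleanly, and it is consistent with the paper's own Example~3.3, where $f$ is visibly analytic. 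I would state the result under that hypothesis and present your argument; that is a genuine improvement over the paper, which leaves the statement unproved and, as written, overreaches.
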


       For example:
          \begin{ex}
          \begin{equation*}
           \frac{\sin e^{-x_1} \cos e^{-x_2}}{1-e^{-x_3}}=\sum\limits_{k_1=1}^{+\infty}\sum\limits_{k_2=0}^{+\infty}\sum\limits_{k_3=0}^{+\infty}
           (-1)^{k_1+k_2-1}\frac{\varphi_{(2k_1-1,2k_2,k_3)}}{(2k_1-1)!(2k_2)!}\in TE(\mathbb{R}^2\oplus (0,+\infty)).
          \end{equation*}
          \end{ex}

 Next we seek the following formal series solution of \eqref{sss} :
  \begin{equation}\label{s.4}
    \left\{
    \begin{array}{l}
      u_j(x,t)=\sum\limits_{k\in \mathbb{N}^3}T_{jk}(t)\varphi_k,\ \ j=1,2,3;\\
      p(x,t)=\sum\limits_{k\in \mathbb{N}^3}T_{4k}(t)\varphi_k,
    \end{array}
    \right.
    \end{equation}
   where $T_{jk}(t)$, $j=1,2,3,4,\ k\in \mathbb{N}^3$ are functions to be determined.
    Then substituting \eqref{s.4} into \eqref{sss} we get
    \begin{equation*}
    \left\{
      \begin{array}{c}
         T'_{m,(0,0,0)}+ \sum\limits_{k> (0,0,0)}\left[T'_{mk}-
         \sum\limits_{j=1}^{3}\sum\limits_{k^{[1]}+k^{[2]}=k}k^{[2]}_{j}T_{jk^{[1]}}T_{mk^{[2]}}
    -k_m T_{4k}\right]\varphi_{k}=0,\ \
     m=1,2,3,\\
     -\sum\limits_{k\in \mathbb{N}^3}(k_1 T_{1k}+k_2 T_{2k}+k_3 T_{3k})\varphi_{k}=0,\\
     u_j(x,0)=\sum\limits_{k\in \mathbb{N}^3}B_{jk}\varphi_{k}=\sum\limits_{k\in \mathbb{N}^3}T_{jk}(0)\varphi_k,\ \  j=1,2,3.
      \end{array}
      \right.
    \end{equation*}
    Clearly $\{T_{jk}(t)\mid j=1,2,3,4,\ k\in \mathbb{N}^3\}$ should satisfy:
     \begin{numcases}{}
         u_j=\sum\limits_{k\in \mathbb{N}^3}T_{jk}(t)\varphi_k\in C(\mathbb{R}_+^3\oplus[0,+\infty)),\ \ \ \ j=1,2,3, \label{11nb}\\
        p=\sum\limits_{k\in \mathbb{N}^3}T_{4k}(t)\varphi_{k}\in C(\mathbb{R}_+^3\oplus[0,+\infty)), \label{22nb}\\
        u_{jt}=\sum\limits_{k\in \mathbb{N}^3}T'_{jk}(t)\varphi_{k}\in C(\mathbb{R}_+^3\oplus[0,+\infty)),\  \ \ \ j=1,2,3, \label{33nb}\\
        u_{jx_i}=\sum\limits_{k\in \mathbb{N}^3} -k_iT_{jk}(t)\varphi_{k}\in C(\mathbb{R}_+^3\oplus[0,+\infty)),\  \ \ \ i,j=1,2,3, \label{44nb}\\
         p_{x_j}=\sum\limits_{k\in \mathbb{N}^3}-k_jT_{4k}(t)\varphi_{k}\in C(\mathbb{R}_+^3\oplus[0,+\infty)),\  \ \ \ j=1,2,3, \label{66nb}\\
         u_iu_{jx_i}=\sum\limits_{k\in \mathbb{N}^3}\eta_{ijk}\varphi_{k} \in C(\mathbb{R}_+^3\oplus[0,+\infty)),\ \ \ \  i,j=1,2,3, \label{77nb}
    \end{numcases}
    where
    \begin{equation*}
     \eta_{ijk}=\sum\limits_{k^{[1]}+k^{[2]}=k}-k^{[2]}_{i}
     T_{ik^{[1]}}T_{jk^{[2]}},\ \
      k^{[m]}=(k_{1}^{[m]},k_{2}^{[m]},k_{3}^{[m]})\in \mathbb{N}^3,\ \
      m=1,2,\ i,j=1,2,3.
    \end{equation*}
    Note that the sequence $\{\varphi_{k}\}_{k\in \mathbb{N}^3}$ is linearly independent, so we have:
    \begin{equation}\label{sim1}
      \left\{
      \begin{array}{c}
          T'_{j,(0,0,0)}=0,\\
          T_{j,(0,0,0)}(0)=B_{j,(0,0,0)},
      \end{array}
      \right.\ \   j=1,2,3,
    \end{equation}
    and
    \begin{equation}\label{sim2}
      \left\{
      \begin{array}{c}
         T'_{ik}-\sum\limits_{j=1}^{3}k_jT_{j,(0,0,0)}T_{ik}- \sum\limits_{j=1}^{3}\ \sum\limits_{k^{[1]}+k^{[2]}=k,\atop k^{[1]},k^{[2]}>(0,0,0)}k^{[2]}_{j}T_{jk^{[1]}}T_{ik^{[2]}}
         -k_i T_{4k}=0,\ \ i=1,2,3,\\
         k_1 T_{1k}+k_2 T_{2k}+k_3 T_{3k}=0,\\
          T_{jk}(0)=B_{jk},\ \ \ \ \ \ \ \ j=1,2,3,
      \end{array}
      \right.
    \end{equation}
    where $k>(0,0,0)$.
    For every $k>(0,0,0)$, the first equation in \eqref{sim2} is multiplied by $k_i$ (i=1,2,3). Together with
 \begin{equation*}
    k_1T'_{1k}+k_2 T'_{2k}+k_3 T'_{3k}=0,\ \ \ \ \ \  k>(0,0,0),
 \end{equation*}
 then we can induce that
    \begin{equation*}
      \sum\limits_{i=1}^{3}k_{i}\sum\limits_{j=1}^{3}\sum\limits_{k^{[1]}+k^{[2]}=k,\atop k^{[1]},k^{[2]}>(0,0,0)}
    k^{[2]}_{j}T_{jk^{[1]}}T_{ik^{[2]}}+T_{4k}\sum\limits_{i=1}^{3}k_i^2
    =0,\ \  k>(0,0,0).
    \end{equation*}
    (where $k^{[1]},k^{[2]}<k$) It follows that
    \begin{equation*}
      \sum\limits_{i,j=1}^{3}\sum\limits_{k^{[1]}+k^{[2]}=k,\atop k^{[1]},k^{[2]}>(0,0,0)}
    k^{[1]}_{i}k^{[2]}_{j}T_{jk^{[1]}}T_{ik^{[2]}}+T_{4k}\sum\limits_{i=1}^{3}k_i^2
    =0,\ \  k>(0,0,0).
    \end{equation*}
   Finally we use an simple iterative method to get $T_{jk}(t)$, $j=1,2,3,4,\ k\in \mathbb{N}^3$ by the following order:

    (i) We can get the $T_{j,(0,0,0)}(t)$, $j=1,2,3$ by solving \eqref{sim1}. Moreover, we let $ T_{4,(0,0,0)}=1$.

    (ii) Base on (i), we solve \eqref{sim2} when $|k|=1$, then we can get $T_{jk}(t)$, $|k|=1$, $j=1,2,3,4$.

    (iii) Base on (i) and (ii), we solve \eqref{sim2} when $|k|=2$, then we can get $T_{jk}(t)$, $|k|=2$, $j=1,2,3,4$.

    Where $|k|=k_1+k_2+k_3$. By repeating this process, we can get:
   \begin{equation*}
    \left\{
    \begin{array}{l}
      T_{j,(0,0,0)}(t)=B_{j,(0,0,0)},\ \ \ \ \ \ \ \ j=1,2,3,\\
     T_{4,(0,0,0)}=1,\\
    T_{4k}(t)=\frac{-\sum\limits_{i,j=1}^{3}\ \sum\limits_{k^{[1]}+k^{[2]}=k,\ k^{[1]},k^{[2]}>(0,0,0)}
    k^{[1]}_{i}k^{[2]}_{j}T_{jk^{[1]}}T_{ik^{[2]}}}{\sum\limits_{i=1}^{3}k_i^2}, \ \ \ \ k>(0,0,0),\\
    T_{jk}(t)=\exp(-P_{k}(t))\left(\int_{0}^{t}Q_{jk}(s)\exp(P_{k}(s))\text{d}s+B_{jk}\right),\ \
   j=1,2,3, \ \ k>(0,0,0),
    \end{array}
    \right.
    \end{equation*}
    where
    \begin{equation*}
      \left\{
      \begin{array}{l}
      Q_{ik}=\sum\limits_{j=1}^{3}\ \sum\limits_{k^{[1]}+k^{[2]}=k,\ k^{[1]},k^{[2]}>(0,0,0)}
     k^{[2]}_{j}T_{jk^{[1]}}T_{ik^{[2]}}+k_i T_{4k},\ \ \ \ i=1,2,3,\\
     P_{k}(t)=\sum\limits_{j=1}^{3}-B_{j,(0,0,0)}k_jt.
      \end{array}
      \right.
    \end{equation*}\vskip8pt

     Clearly we have:

     \begin{thm}\label{thm1}   If the series \eqref{s.4} we obtain satisfies
     \eqref{11nb}-\eqref{77nb}, then it is a solution of \eqref{sss}.
     \end{thm}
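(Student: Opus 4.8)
The plan is to observe that the coefficient functions $T_{jk}(t)$ have been manufactured precisely so that, once the analytic hypotheses \eqref{11nb}--\eqref{77nb} make the termwise operations legitimate, the system \eqref{sss} collapses into the algebraic/ODE identities that those coefficients already satisfy. Recall from the derivation above that the $T_{jk}$ solve \eqref{sim1} when $k=(0,0,0)$ and \eqref{sim2} when $k>(0,0,0)$; in particular $T'_{j,(0,0,0)}\equiv0$ and $T_{j,(0,0,0)}(0)=B_{j,(0,0,0)}$, the constraint $k_1T_{1k}+k_2T_{2k}+k_3T_{3k}=0$ holds for every $k>(0,0,0)$, the pressure coefficient $T_{4k}$ solves the induced scalar relation, and $T_{jk}(0)=B_{jk}$ for all $k$. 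These are purely formal consequences of the explicit formulas and require no convergence.

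First I would use the hypotheses to upgrade the \emph{formal} series to a genuine classical solution candidate. By \eqref{11nb} and \eqref{22nb} the functions $u_j$ and $p$ are continuous on $\mathbb{R}_+^3\oplus[0,+\infty)$; reading \eqref{33nb}, \eqref{44nb}, \eqref{66nb} as the assertion that the series of termwise $t$- and $x_i$-derivatives converge locally uniformly to continuous functions, the standard theorem on term-by-term differentiation (uniform convergence of the differentiated series together with convergence of the original series at one point) shows that $u_j$ is $C^1$ in $(x,t)$ with $u_{jt}=\sum_k T'_{jk}\varphi_k$ and $u_{jx_i}=\sum_k(-k_i)T_{jk}\varphi_k$, and that $p_{x_j}=\sum_k(-k_j)T_{4k}\varphi_k$. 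Hypothesis \eqref{77nb} furnishes the representation $u_iu_{jx_i}=\sum_k\eta_{ijk}\varphi_k$ of the quadratic terms as a convergent series in $C(\mathbb{R}_+^3\oplus[0,+\infty))$, so that in particular the Cauchy-product rearrangement used to identify $\eta_{ijk}$ is valid and $\sum_i u_iu_{jx_i}$ is continuous with the stated expansion.

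Next I would substitute all of these expansions into \eqref{sss} and collect the coefficient of each $\varphi_k$. Inserting into the momentum equation (with free index written $m$, as in the display just after \eqref{s.4}) reproduces exactly the quantity $T'_{m,(0,0,0)}+\sum_{k>(0,0,0)}\bigl[T'_{mk}-\sum_{j=1}^3\sum_{k^{[1]}+k^{[2]}=k}k^{[2]}_jT_{jk^{[1]}}T_{mk^{[2]}}-k_mT_{4k}\bigr]\varphi_k$; separating off the $k^{[1]}=(0,0,0)$ summand (the $k^{[2]}=(0,0,0)$ summand contributes $0$ because of the factor $k^{[2]}_j$) turns each bracket into the left side of the first line of \eqref{sim2}, which vanishes by construction, while the $\varphi_{(0,0,0)}$-coefficient $T'_{m,(0,0,0)}$ vanishes by \eqref{sim1}. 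Hence $u_{mt}+\sum_i u_iu_{mx_i}+p_{x_m}\equiv0$ on $\mathbb{R}_+^3\oplus[0,+\infty)$ for $m=1,2,3$. Substituting into the incompressibility equation gives $\sum_k-(k_1T_{1k}+k_2T_{2k}+k_3T_{3k})\varphi_k$, whose every coefficient is $0$ — trivially for $k=(0,0,0)$, and by the second line of \eqref{sim2} for $k>(0,0,0)$. Finally, setting $t=0$ gives $u_j(x,0)=\sum_k T_{jk}(0)\varphi_k=\sum_k B_{jk}\varphi_k$, the prescribed initial data, since $T_{jk}(0)=B_{jk}$. Thus \eqref{s.4} solves \eqref{sss}.

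The only delicate point is the step passing from ``$u_{jt}$, $u_{jx_i}$, $p_{x_j}$ are each given by a convergent series of continuous functions'' to ``$u_j$ and $p$ are actually differentiable with those series as their derivatives'': this is not automatic from continuity of the sum alone and genuinely needs the (locally) uniform convergence of the differentiated series. That, however, is exactly the content being granted in \eqref{33nb}--\eqref{66nb}, so under the intended reading of those hypotheses no real obstacle remains — all of the substantive analytic difficulty has been deliberately relocated into verifying \eqref{11nb}--\eqref{77nb} in concrete situations, which is the task taken up in the remainder of the section.
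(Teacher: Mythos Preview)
Your argument is correct and is exactly the argument the paper has in mind: the authors do not actually give a proof of this theorem, but simply preface it with ``Clearly we have,'' so your write-up supplies the omitted details. Your identification of the only nontrivial step --- that \eqref{33nb}--\eqref{66nb} must be read as (locally) uniform convergence of the differentiated series so that termwise differentiation is justified --- is precisely the point the paper is suppressing.
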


     Next we prove that the series \eqref{s.4} we obtain satisfies
     \eqref{11nb}-\eqref{77nb} in some cases. First we give some Lemmas.

    \begin{lem}\label{lem1} (Abel identities \cite{ci}) For any $n=1,2,\cdots$, we have
      \begin{itemize}
            \item [{\rm (i)}]
       $\sum\limits_{k=0}^{n}\frac{n!}{k!(n-k)!}(x+k)^{k-1}(y+n-k)^{n-k}=x^{-1}(x+y+n)^n,$

            \item [{\rm (ii)}] $\sum\limits_{k=0}^{n}\frac{n!}{k!(n-k)!}(x+k)^{k-1}(y+n-k)^{n-k-1}=(x^{-1}+y^{-1})(x+y+n)^{n-1}.$
             \end{itemize}
             \end{lem}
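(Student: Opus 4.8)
\medskip
\noindent\textbf{Proof idea.}\ The plan is to deduce both identities at once from the exponential generating function machinery attached to the tree function $\mathcal{T}(z)$, i.e.\ the unique formal power series with $\mathcal{T}(z)=z\,e^{\mathcal{T}(z)}$, whose coefficients are $\mathcal{T}(z)=\sum_{n\ge 1}\frac{n^{n-1}}{n!}z^n$. The first step is to record, via the two standard forms of the Lagrange inversion formula applied to $\mathcal{T}=z\phi(\mathcal{T})$ with $\phi(u)=e^u$ (so that $z\phi'(\mathcal{T})=\mathcal{T}$), the expansions
\[
e^{\alpha\mathcal{T}(z)}=\sum_{n\ge 0}\frac{\alpha(\alpha+n)^{n-1}}{n!}\,z^n,\qquad
\frac{e^{\alpha\mathcal{T}(z)}}{1-\mathcal{T}(z)}=\sum_{n\ge 0}\frac{(\alpha+n)^n}{n!}\,z^n,
\]
where the $n=0$ term of the left-hand expansion is read as $1$, consistent with the convention $\alpha(\alpha+0)^{-1}=1$ used in the statement of the Lemma.

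The second step is to multiply these series in pairs and compare coefficients of $z^n$, exploiting that multiplying exponentials adds their parameters --- precisely the additive structure $e^{m_1x}e^{m_2x}=e^{(m_1+m_2)x}$ emphasised in the Introduction. From $e^{x\mathcal{T}}\cdot e^{y\mathcal{T}}=e^{(x+y)\mathcal{T}}$, the coefficient of $z^n$ reads
\[
\sum_{k=0}^{n}\frac{1}{k!\,(n-k)!}\,x(x+k)^{k-1}\,y(y+n-k)^{n-k-1}=\frac{(x+y)(x+y+n)^{n-1}}{n!};
\]
multiplying by $n!$ and dividing by $xy$ gives identity (ii). Likewise, from $\dfrac{e^{x\mathcal{T}}}{1-\mathcal{T}}\cdot e^{y\mathcal{T}}=\dfrac{e^{(x+y)\mathcal{T}}}{1-\mathcal{T}}$, the coefficient of $z^n$ reads
\[
\sum_{k=0}^{n}\frac{(x+n-k)^{n-k}}{(n-k)!}\cdot\frac{y(y+k)^{k-1}}{k!}=\frac{(x+y+n)^n}{n!};
\]
multiplying by $n!$, dividing by $y$, and interchanging the names $x$ and $y$ gives identity (i).

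The only point that needs care --- and the step I would be most deliberate about --- is the bookkeeping at the boundary indices $k=0$ and $k=n$, where factors such as $(x+k)^{k-1}$ or $(y+n-k)^{n-k-1}$ carry a negative exponent. The clean way to handle this is to clear the spurious poles first: multiplying identity (i) by $x$ and identity (ii) by $xy$ turns both sides into genuine polynomials in $x$ and $y$, so the equalities are purely formal and the generating-function computations above are legitimate coefficient identities in $\mathbb{Q}[x,y][[z]]$, with the convention $\alpha(\alpha+0)^{-1}=1$ being exactly what makes the boundary terms fit. As an alternative, entirely self-contained route one could fix $n$, view each cleared identity as an equality of polynomials of degree $n$ in $y$, and prove it by induction on $n$ from the recurrence obtained on differentiating in $y$; but the tree-function argument is shorter and makes the role of the additive structure transparent.
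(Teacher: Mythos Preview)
Your argument via the tree function $\mathcal{T}(z)=ze^{\mathcal{T}(z)}$ and the two forms of Lagrange inversion is correct: the expansions of $e^{\alpha\mathcal{T}}$ and $e^{\alpha\mathcal{T}}/(1-\mathcal{T})$ are standard, and the coefficient extractions you perform after multiplying yield exactly identities (ii) and (i) once you clear the factors $xy$ and $y$ and relabel. Your remark about the boundary terms $k=0$ and $k=n$ is also the right way to make the formal computation honest.

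As for comparison with the paper: the paper does not prove this lemma at all. It is stated with a citation to Riordan's \emph{Combinatorial Identities} and then used as a black box to derive Corollary~3.6. So your generating-function proof is not a different route from the paper's proof --- it is simply a proof where the paper offers none. If anything, your approach is more informative in this context than a bare citation, since the multiplicative structure $e^{x\mathcal{T}}e^{y\mathcal{T}}=e^{(x+y)\mathcal{T}}$ that drives your argument is exactly the ``additive under multiplication'' property (ii) of the family $\{e^{kx}\}$ highlighted in the Introduction as the key structural feature behind the whole paper.
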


      \begin{cor}\label{cor2}  For any $k=1,2,\cdots$, we have
      \begin{itemize}
            \item [{\rm (i)}]
       $\sum\limits_{m=1}^{k}\frac{(k+1)!}{m!(k+1-m)!}
                            m^m(k+1-m)^{k-m}=k(k+1)^k,$

            \item [{\rm (ii)}] $\sum\limits_{m=1}^{k}\frac{(k+1)!}{m!(k+1-m)!}
m^{m-1}(k+1-m)^{k-m}=2k(k+1)^{k-1}\leq 2(k+1)^{k}.$
             \end{itemize}
             \end{cor}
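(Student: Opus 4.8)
The plan is to read off both identities from the Abel identities of Lemma~\ref{lem1} by taking $n=k+1$ and degenerating the free parameters. The only point requiring care is that the powers $m^{m}$ and $m^{m-1}$ in the Corollary must come from factors $(x+m)^{m-1}$ of an Abel sum evaluated at $x=0$, while the right-hand sides in Lemma~\ref{lem1} carry the singular factor $x^{-1}$ (and, in part~(ii), also $y^{-1}$). So I would first peel off the terms of the Abel sum that are manifestly singular at $x=0$ --- the $j=0$ term, and in part~(ii) also the $j=k+1$ term --- move them to the other side, observe that what remains on both sides is a genuine polynomial in $x$ (and $y$), and only then set $x=0$ (and $y=0$).

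For part~(i): starting from Lemma~\ref{lem1}(i) with $n=k+1$ and transferring the $j=0$ term to the right, the right-hand side becomes $x^{-1}\big[(x+y+k+1)^{k+1}-(y+k+1)^{k+1}\big]$, a polynomial in $x$ whose value at $x=0$ is $(k+1)(y+k+1)^{k}$; hence
\begin{equation*}
\sum_{j=1}^{k+1}\binom{k+1}{j}j^{j-1}(y+k+1-j)^{k+1-j}=(k+1)(y+k+1)^{k}.
\end{equation*}
Next I would set $y=0$, evaluate the $j=k+1$ term with the convention $0^0=1$ (it equals $(k+1)^{k}$) and subtract it, obtaining $\sum_{j=1}^{k}\binom{k+1}{j}j^{j-1}(k+1-j)^{k+1-j}=(k+1)^{k+1}-(k+1)^{k}=k(k+1)^{k}$. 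The reindexing $j\mapsto k+1-j$ turns the left-hand side into $\sum_{m=1}^{k}\binom{k+1}{m}m^{m}(k+1-m)^{k-m}$, which is the assertion in~(i).

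For part~(ii): I would apply Lemma~\ref{lem1}(ii) with $n=k+1$ and move both singular terms to the right (the $j=0$ term gives $x^{-1}(y+k+1)^{k}$ and the $j=k+1$ term gives $y^{-1}(x+k+1)^{k}$), rewriting the right-hand side as
\begin{equation*}
x^{-1}\big[(x+y+k+1)^{k}-(y+k+1)^{k}\big]+y^{-1}\big[(x+y+k+1)^{k}-(x+k+1)^{k}\big].
\end{equation*}
Each bracket is divisible by its displayed variable, so letting $x\to0$ and then $y\to0$ (the limits being the evident derivatives) gives
\begin{equation*}
\sum_{j=1}^{k}\binom{k+1}{j}j^{j-1}(k+1-j)^{k-j}=k(k+1)^{k-1}+k(k+1)^{k-1}=2k(k+1)^{k-1},
\end{equation*}
which is the equality in~(ii); the inequality $2k(k+1)^{k-1}\le 2(k+1)^{k}$ is then immediate since $k\le k+1$.

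The main obstacle is purely one of bookkeeping: one must check that, after deleting the terms that are plainly singular at $x=0$ (and $y=0$), the surviving left- and right-hand sides really do extend polynomially --- hence continuously --- to those values, and that the conventions $0^0=1$ and the $x^{-1}$-, $y^{-1}$-cancellations are applied consistently at every step. Once this is arranged the computation is entirely mechanical, the genuine combinatorial content being supplied already by Lemma~\ref{lem1}.
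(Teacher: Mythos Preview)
Your proof is correct and follows essentially the same route as the paper: apply the Abel identities with $n=k+1$, strip off the terms that are singular at $x=0$ (and, for (ii), at $y=0$), then specialize the remaining polynomial identity at $x=y=0$ and reindex. The paper carries this out by expanding $x^{-1}(x+y+k+1)^{k+1}$ binomially and cancelling, arriving at the same intermediate identity $\sum_{m=1}^{k}\binom{k+1}{m}m^{m-1}(k+1-m)^{k+1-m}=k(k+1)^{k}$ you obtain, and then dispatches (ii) with a one-line ``similarly''; your treatment of (ii) is in fact more explicit than the paper's.
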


       \begin{proof}
      By Lemma \ref{lem1} (i) we can induce that
      \begin{align*}
         &  \sum\limits_{m=0}^{k+1}\frac{(k+1)!}{m!(k+1-m)!}(x+m)^{m-1}(y+k+1-m)^{k+1-m}\\
        = & \sum\limits_{m=1}^{k}\frac{(k+1)!}{m!(k+1-m)!}(x+m)^{m-1}(y+k+1-m)^{k+1-m}+x^{-1}(y+k+1)^{k+1}+(x+k+1)^k\\
        =& x^{-1}(x+y+k+1)^{k+1}\\
        =& x^{-1}(y+k+1)^{k+1}+(k+1)(y+k+1)^{k}+\sum\limits_{m=2}^{k+1}\frac{(k+1)!}{m!(k+1-m)!}x^{m-1}(y+k+1)^{k+1-m}.
      \end{align*}
Then we have
\begin{align*}
   & \sum\limits_{m=1}^{k}\frac{(k+1)!}{m!(k+1-m)!}(x+m)^{m-1}(y+k+1-m)^{k+1-m}+(x+k+1)^k\\
  = & (k+1)(y+k+1)^{k}+\sum\limits_{m=2}^{k+1}\frac{(k+1)!}{m!(k+1-m)!}x^{m-1}(y+k+1)^{k+1-m}.
\end{align*}
Let x=y=0, we get
\begin{equation*}
  \sum\limits_{m=1}^{k}\frac{(k+1)!}{m!(k+1-m)!}m^{m-1}(k+1-m)^{k+1-m}=\sum\limits_{m=1}^{k}\frac{(k+1)!}{m!(k+1-m)!}m^{m}(k+1-m)^{k-m}=k(k+1)^{k}.
\end{equation*}
Similarly, we can get (ii).
      \end{proof}

       \begin{cor}\label{cor3} For any $ k_j=1,2,\cdots,\ j=1,2,3$, we have
      \begin{equation*}
\sum\limits_{1\leq m_i\leq k_i,\ i=1,2,3}\ m_1\prod\limits_{j=1}^3 \frac{m_j^{m_j-1}(k_j+1-m_j)^{k_j-m_j}}{m_j!(k_j+1-m_j)!}\leq 4k_1\prod\limits_{j=1}^n\frac{(k_j+1)^{k_j}}{(k_j+1)!}.
      \end{equation*}
      \end{cor}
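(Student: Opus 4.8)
The key observation is that the summand is \emph{separable} in the three indices $m_1,m_2,m_3$: apart from the single extra weight $m_1$ (which involves only the first index), the quantity $\prod_{j=1}^3 \frac{m_j^{m_j-1}(k_j+1-m_j)^{k_j-m_j}}{m_j!(k_j+1-m_j)!}$ is a product of factors each depending on its own pair $(m_j,k_j)$. Hence the triple sum over $1\le m_i\le k_i$ factors as
\begin{equation*}
\Biggl(\sum_{m_1=1}^{k_1} m_1\,\frac{m_1^{m_1-1}(k_1+1-m_1)^{k_1-m_1}}{m_1!(k_1+1-m_1)!}\Biggr)
\prod_{j=2}^{3}\Biggl(\sum_{m_j=1}^{k_j}\frac{m_j^{m_j-1}(k_j+1-m_j)^{k_j-m_j}}{m_j!(k_j+1-m_j)!}\Biggr).
\end{equation*}
So the whole problem reduces to evaluating/estimating the two types of single sums, and the work is exactly Corollary \ref{cor2}.

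For the first factor, absorb the weight $m_1$ into the numerator to turn $m_1^{m_1-1}$ into $m_1^{m_1}$, and pull out $1/(k_1+1)!$ so that the binomial coefficient $\frac{(k_1+1)!}{m_1!(k_1+1-m_1)!}$ appears. This is precisely the left-hand side of Corollary \ref{cor2}(i), giving
\begin{equation*}
\sum_{m_1=1}^{k_1} \frac{m_1^{m_1}(k_1+1-m_1)^{k_1-m_1}}{m_1!(k_1+1-m_1)!}=\frac{k_1(k_1+1)^{k_1}}{(k_1+1)!}.
\end{equation*}
For each of the factors with $j=2,3$, the same normalization by $1/(k_j+1)!$ puts the sum into the form of Corollary \ref{cor2}(ii), so it equals $\dfrac{2k_j(k_j+1)^{k_j-1}}{(k_j+1)!}$, and the bound $2k_j(k_j+1)^{k_j-1}\le 2(k_j+1)^{k_j}$ recorded there upgrades this to $\le \dfrac{2(k_j+1)^{k_j}}{(k_j+1)!}$.

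Multiplying the three estimates together yields
\begin{equation*}
\frac{k_1(k_1+1)^{k_1}}{(k_1+1)!}\cdot\frac{2(k_2+1)^{k_2}}{(k_2+1)!}\cdot\frac{2(k_3+1)^{k_3}}{(k_3+1)!}
=4k_1\prod_{j=1}^{3}\frac{(k_j+1)^{k_j}}{(k_j+1)!},
\end{equation*}
which is the claimed inequality (note that the exponent $n$ in the product on the right-hand side of the statement should read $3$). There is essentially no serious obstacle here: the only points requiring a little care are recognizing the separable structure, correctly absorbing the $m_1$ weight so that Corollary \ref{cor2}(i) applies rather than (ii), and keeping track of the $(k_j+1)!$ normalization factors. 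The estimate is deliberately not tight — one keeps only the crude bound $2k_j(k_j+1)^{k_j-1}\le 2(k_j+1)^{k_j}$ for $j=2,3$ while retaining the exact value for $j=1$, which is what produces the clean constant $4$ and the single surviving factor $k_1$.
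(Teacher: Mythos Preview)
Your proposal is correct and is precisely the argument the paper intends: the result is stated as a corollary (with no separate proof given) because it follows immediately from Corollary~\ref{cor2} via the separable structure you identify, applying part~(i) to the $m_1$-sum and part~(ii) to the $m_2$- and $m_3$-sums. Your observation that the ``$n$'' in the product on the right-hand side is a typo for $3$ is also correct.
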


     \begin{lem}  If $B_{j,(0,0,0)}\leq -2$, $j=1,2,3$, $\epsilon>0$, and if
       \begin{equation*}
         |B_{jk}|\leq  \frac{e^{-|k|(1+\epsilon)}}{10^3}\prod\limits_{j=1,2,3,\ k_j>0}\frac{k_j^{k_j-1}}{k_j!},\ \ \ \ \ \ \ k>(0,0,0),\ j=1,2,3.
       \end{equation*}
      Then we have
     \begin{equation}\label{guina1}
     |T_{ik}(t)|\leq \frac{1}{100}\prod\limits_{j=1,2,3,\ k_j>0}\frac{k_j^{k_j-1}}{k_j!}\exp\left(-\frac{1}{2}P_{k}(t)-|k|(1+\epsilon)\right),\ \ \ \ k>(0,0,0),\  i=1,2,3.
     \end{equation}
      \end{lem}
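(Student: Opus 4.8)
The plan is to prove \eqref{guina1} by strong induction on $|k|=k_1+k_2+k_3$. Two elementary observations will be used repeatedly. Since $B_{j,(0,0,0)}\le-2$ for $j=1,2,3$, the function $P_{k}(t)=\sum_{j=1}^{3}\bigl(-B_{j,(0,0,0)}\bigr)k_jt$ is linear in $t$ with slope $a_{k}:=\sum_{j=1}^{3}\bigl(-B_{j,(0,0,0)}\bigr)k_j\ge 2|k|$; hence $P_{k}(t)\ge0$ for $t\ge0$, and $P$ is additive along splittings: $P_{k^{[1]}}+P_{k^{[2]}}=P_{k}$ whenever $k^{[1]}+k^{[2]}=k$. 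Moreover $|k^{[1]}|+|k^{[2]}|=|k|$, and in the convolution sums the parts satisfy $k^{[1]},k^{[2]}>(0,0,0)$, so $|k^{[1]}|,|k^{[2]}|\ge1$ are both strictly less than $|k|$, which makes the induction legitimate. Write $\Pi(k)=\prod_{j:\,k_j>0}k_j^{k_j-1}/k_j!$ and $E(k,t)=\exp\bigl(-\tfrac12P_{k}(t)-|k|(1+\epsilon)\bigr)$, so that \eqref{guina1} is the assertion $|T_{ik}(t)|\le\tfrac1{100}\Pi(k)E(k,t)$. For the base case $|k|=1$ there is no splitting $k^{[1]}+k^{[2]}=k$ with both parts $>(0,0,0)$, so the recursion gives $T_{4k}\equiv0$, hence $Q_{ik}\equiv0$ and $T_{ik}(t)=\exp(-P_{k}(t))B_{ik}$; then $0<\exp(-P_{k}(t))\le\exp(-\tfrac12P_{k}(t))$ together with $|B_{ik}|\le 10^{-3}e^{-|k|(1+\epsilon)}\Pi(k)$ gives $|T_{ik}(t)|\le10^{-3}\Pi(k)E(k,t)$, better than required.

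For the inductive step, fix $k$ with $|k|\ge2$ and assume \eqref{guina1} for every $k'>(0,0,0)$ with $|k'|<|k|$. Multiplying two instances of the hypothesis and using the additivity of $P$ and of $|\cdot|$ gives $|T_{jk^{[1]}}(t)\,T_{ik^{[2]}}(t)|\le10^{-4}\Pi(k^{[1]})\Pi(k^{[2]})E(k,t)$ for every admissible splitting. I first bound the pressure coefficient: inserting this estimate into the formula for $T_{4k}$ and using $\sum_{i,j}k^{[1]}_ik^{[2]}_j=|k^{[1]}|\,|k^{[2]}|$ yields
\[
|T_{4k}(t)|\le\frac{10^{-4}E(k,t)}{\sum_i k_i^{2}}\sum_{k^{[1]}+k^{[2]}=k}|k^{[1]}|\,|k^{[2]}|\,\Pi(k^{[1]})\Pi(k^{[2]}).
\]
The multi-index convolution factorizes over the three coordinates, and in each single coordinate the Abel identities of Lemma~\ref{lem1}, in the form of Corollaries~\ref{cor2}--\ref{cor3}, give (with $c(m)=m^{m-1}/m!$ and $c(0):=1$) the bounds $\sum_{a+b=n}c(a)c(b)\le4c(n)$, $\sum_{a+b=n}a\,c(a)c(b)\le2n\,c(n)$ and $\sum_{a+b=n}ab\,c(a)c(b)\le n^{2}c(n)$; expanding $|k^{[1]}|\,|k^{[2]}|=\sum_{l,l'}k^{[1]}_lk^{[2]}_{l'}$ and applying these coordinatewise collapses the sum above to $\le16|k|^{2}\Pi(k)$, so with $|k|^{2}\le3\sum_i k_i^{2}$ one obtains $|T_{4k}(t)|\le48\cdot10^{-4}\,\Pi(k)E(k,t)$. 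The same coordinatewise estimates applied to the convolution sum in the formula for $Q_{ik}$ (and to the term $k_iT_{4k}$), using $\sum_j k^{[2]}_j=|k^{[2]}|$ and the fact that after replacing the $T$-factors by their inductive bounds the residual purely combinatorial sum may be freely enlarged to all splittings — it no longer sees the large coefficient $B_{j,(0,0,0)}$, which is precisely why the splitting $k^{[1]}=(0,0,0)$ was excluded — yield $|Q_{ik}(t)|\le80\cdot10^{-4}\,|k|\,\Pi(k)E(k,t)$.

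To close the induction, insert these bounds into the Duhamel formula $T_{ik}(t)=\exp(-P_{k}(t))\bigl(\int_0^{t}Q_{ik}(s)\exp(P_{k}(s))\ds+B_{ik}\bigr)$ coming from \eqref{sim2}. For the integral term, $|Q_{ik}(s)|\exp(P_{k}(s))\le80\cdot10^{-4}|k|\Pi(k)e^{-|k|(1+\epsilon)}\exp\bigl(\tfrac12P_{k}(s)\bigr)$, and since $P_{k}(s)=a_{k}s$ with $a_{k}\ge2|k|$,
\[
\exp(-P_{k}(t))\int_0^{t}\exp\bigl(\tfrac12P_{k}(s)\bigr)\ds=\frac{2}{a_{k}}\Bigl(\exp\bigl(-\tfrac12P_{k}(t)\bigr)-\exp(-P_{k}(t))\Bigr)\le\frac{1}{|k|}\exp\bigl(-\tfrac12P_{k}(t)\bigr),
\]
so the integral term is $\le80\cdot10^{-4}\,\Pi(k)E(k,t)$; and $\exp(-P_{k}(t))|B_{ik}|\le10^{-3}\Pi(k)E(k,t)$. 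Adding, $|T_{ik}(t)|\le\bigl(80\cdot10^{-4}+10^{-3}\bigr)\Pi(k)E(k,t)=9\cdot10^{-3}\,\Pi(k)E(k,t)\le\tfrac1{100}\Pi(k)E(k,t)$, which is \eqref{guina1} for $k$, completing the induction.

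The one genuine obstacle is the combinatorial bookkeeping in the inductive step: the number of splittings $k^{[1]}+k^{[2]}=k$ grows with $|k|$, so a naive triangle-inequality bound on the convolution would produce a $|k|$-dependent constant that the decay factors cannot swallow. It is exactly the Abel identities that force the convolution of the weights $k_j^{k_j-1}/k_j!$ to collapse back to a bounded multiple of the same weight at $k$; after that, the gain $2/a_{k}\le1/|k|$ from integrating $\exp(\tfrac12P_{k}(s))$ absorbs the residual factor $|k|$, and it only remains to check that the resulting pure number, assembled from the combinatorial constants and the small data constants $10^{-3}$, $10^{-4}$, stays below $1/100$ — which it does, with a thin margin. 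Secondary technical nuisances are the consistent treatment of multi-indices with vanishing components (so that the convention $c(0)=1$ is harmless) and the bookkeeping that separates off the $k^{[1]}=(0,0,0)$ contribution, which carries the possibly large $B_{j,(0,0,0)}$ and has already been absorbed into $P_{k}$ via its integrating factor.
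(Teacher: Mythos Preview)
Your proof is correct and follows essentially the same route as the paper's: strong induction on $|k|$, using the Abel-type identities (Corollaries~\ref{cor2}--\ref{cor3}) to collapse the convolution $\sum\Pi(k^{[1]})\Pi(k^{[2]})$ back to a bounded multiple of $\Pi(k)$, then exploiting the gain $2/a_k\le1/|k|$ from integrating the Duhamel formula to absorb the residual $|k|$. Your write-up is in fact more explicit than the paper's --- you spell out the factorization over coordinates, the three single-variable bounds on $\sum c(a)c(b)$, $\sum a\,c(a)c(b)$, $\sum ab\,c(a)c(b)$, and the precise numerical bookkeeping ($48\cdot10^{-4}$ for $T_{4k}$, $80\cdot10^{-4}|k|$ for $Q_{ik}$, closing at $9\cdot10^{-3}$) --- whereas the paper jumps directly to the constants $60/10^4$ and $90k_1/10^4$ and handles the degenerate cases $k_j=0$ by a one-line remark rather than by your uniform convention $c(0)=1$.
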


    \begin{proof} We prove \eqref{guina1} by the induction method.
    By a simple calculate we can induce that \eqref{guina1} holds when $|k|=1,2$.
    Suppose that it holds for any $|k|<n$ $(n>2)$, then by Corollary \ref{cor2} and \ref{cor3}, for any $k=(k_1,k_2,k_3)\geq(1,1,1)$, $|k|=n$(without loss of generality we suppose that $k_1\geq k_2,k_3$), we have
    \begin{align*}
      |T_{4k}(t)| & \leq\frac{\sum\limits_{m,j=1}^{3}\ \sum\limits_{k^{[1]}+k^{[2]}=k,\ k^{[1]},k^{[2]}>(0,0,0)}
     k^{[1]}_{m}k^{[2]}_{j}\left|T_{jk^{[1]}}\right|\left|T_{mk^{[2]}}\right|}{\sum\limits_{m=1}^{3}k_m^2} \\
       & \leq\frac{3}{10^4}
     \sum\limits_{(0,0,0)< (m_1,m_2,m_3)\leq k}\ \prod\limits_{j=1}^3 \frac{m_j^{m_j-1}(k_j-m_j)^{k_j-m_j-1}}{m_j!(k_j-m_j)!}\exp(-\frac{1}{2}P_{k}(t)-|k|(1+\epsilon))\\
     &\leq\frac{60}{10^4}\ \prod\limits_{j=1}^3\frac{k_j^{k_j-1}}{k_j!}\exp\left(-\frac{1}{2}P_{k}(t)-|k|(1+\epsilon)\right),\\
      |Q_{jk}(s)| & \leq\sum\limits_{j=1}^{3}\ \sum\limits_{k^{[1]}+k^{[2]}=k,\ k^{[1]},k^{[2]}>(0,0,0)}
     k^{[2]}_{j}|T_{jk^{[1]}}(s)||T_{mk^{[2]}}(s)|+k_j |T_{4k}(s)| \\
       & \leq\frac{90k_1}{10^4}\prod\limits_{j=1}^3\frac{k_j^{k_j-1}}{k_j!}\exp\left(-\frac{1}{2}P_{k}(t)-|k|(1+\epsilon)\right),\ \ \ \ j=1,2,3,\\
       |T_{jk}(t)|&\leq\exp(-P_k(t))\left(\int_{0}^{t}|Q_{jk}(s)|\exp(P_k(s))\text{d}s+|B_{jk}|\right)\\
     &\leq\frac{1}{100}\prod\limits_{j=1}^3\frac{k_j^{k_j-1}}{k_j!}\exp\left(-\frac{1}{2}P_{k}(t)-|k|(1+\epsilon)\right),\ \ \ \ j=1,2,3.
    \end{align*}
    In a similar way, we can prove that \eqref{guina1} holds for any $k=(k_1,k_2,k_3)\in \mathbb{N}^3$ with $k_1=0$ or $k_2=0$ or $k_3=0$.
\end{proof}

\begin{thm} If $B_{j,(0,0,0)}\leq -2$, $j=1,2,3$, $\epsilon>0$, and if
       \begin{equation*}
         |B_{jk}|\leq  \frac{e^{-|k|(1+\epsilon)}}{10^3}\prod\limits_{j=1,2,3,\ k_j>0}\frac{k_j^{k_j-1}}{k_j!},\ \ \ \ \ \ \ k>(0,0,0),\ j=1,2,3.
       \end{equation*}
   Then there exists a series solution of \eqref{sss} satisfying \eqref{11nb}- \eqref{77nb}.
\end{thm}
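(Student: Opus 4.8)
The plan is to verify that the formal series constructed by the iterative procedure indeed satisfies the continuity and termwise-differentiability requirements \eqref{11nb}--\eqref{77nb}, after which Theorem \ref{thm1} immediately gives that the series is a genuine solution of \eqref{sss}. The entire burden therefore reduces to establishing convergence (locally uniform on $\mathbb{R}_+^3\oplus[0,+\infty)$, together with enough decay to absorb the polynomial factors $k_i$, $k_i^2$ coming from differentiation and from the quadratic nonlinearity) of the six series in \eqref{11nb}--\eqref{77nb}. This is exactly what the preceding Lemma delivers: under the stated smallness hypotheses on $B_{j,(0,0,0)}$ and $B_{jk}$, we have the pointwise-in-$t$ bound
\begin{equation*}
 |T_{ik}(t)|\leq \frac{1}{100}\prod\limits_{j=1,2,3,\ k_j>0}\frac{k_j^{k_j-1}}{k_j!}\exp\!\left(-\tfrac12 P_k(t)-|k|(1+\epsilon)\right),\qquad k>(0,0,0),
\end{equation*}
and the analogous bound on $|T_{4k}(t)|$ produced inside that proof.

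First I would fix an arbitrary compact set $K\subseteq\mathbb{R}_+^3$ and a time horizon $[0,\tau]$, and observe that on $K\times[0,\tau]$ one has $\varphi_k(x)\le 1$ (since $x_i\ge 0$ and $k_i\ge 0$) and $\exp(-\tfrac12 P_k(t))=\exp(\tfrac12\sum_j B_{j,(0,0,0)}k_j t)\le 1$ because $B_{j,(0,0,0)}\le -2<0$; hence the $t$-dependent exponential is harmless and, in fact, only helps. Consequently each term of \eqref{11nb} is dominated by $\tfrac1{100}\prod_{k_j>0}\frac{k_j^{k_j-1}}{k_j!}\,e^{-|k|(1+\epsilon)}$. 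Using the elementary estimate $\frac{k_j^{k_j-1}}{k_j!}\le \frac{k_j^{k_j}}{k_j!}\le e^{k_j}$ (Stirling), each term is bounded by $\tfrac1{100}e^{|k|}e^{-|k|(1+\epsilon)}=\tfrac1{100}e^{-\epsilon|k|}$, and $\sum_{k\in\mathbb{N}^3}e^{-\epsilon|k|}<\infty$. By the Weierstrass $M$-test the series \eqref{11nb} converges absolutely and uniformly on $K\times[0,\tau]$, so $u_j\in C(\mathbb{R}_+^3\oplus[0,+\infty))$. The same reasoning, now including an extra factor $k_i$ (for \eqref{33nb}--\eqref{66nb}, where $T'_{jk}$ is expressed through the ODE in \eqref{sim2} and bounded by $Q_{jk}+k_j T_{4k}$ type quantities, all already estimated in the Lemma's proof) or $k_i$ again (for \eqref{44nb}, \eqref{66nb}), costs only a polynomial factor, which is swallowed by the geometric factor $e^{-\epsilon|k|}$ at the price of shrinking $\epsilon$ to, say, $\epsilon/2$ in the final bound; thus \eqref{22nb}--\eqref{66nb} follow identically. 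For \eqref{77nb} one writes $\eta_{ijk}=\sum_{k^{[1]}+k^{[2]}=k}-k^{[2]}_i T_{ik^{[1]}}T_{jk^{[2]}}$ and bounds $|\eta_{ijk}|$ by a convolution of the two bounding sequences; here the combinatorial input of Corollary \ref{cor2} and Corollary \ref{cor3} is what guarantees that the convolution of $\prod\frac{k_j^{k_j-1}}{k_j!}$ with itself is again controlled by a constant times $\prod\frac{k_j^{k_j}}{k_j!}$ with an extra factor $k_i$, which re-enters the same $e^{-\epsilon|k|}$ regime.

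The main obstacle is not the convergence bookkeeping but making sure the termwise operations are legitimate: one must justify differentiating \eqref{s.4} term by term in $t$ and in each $x_i$, and rearranging the Cauchy product that defines $u_i u_{jx_i}$. The clean way is to prove that \emph{after} differentiation the resulting series still enjoys a uniform $M$-test bound on every $K\times[0,\tau]$ (which the above geometric decay provides, with room to spare), and then invoke the standard theorem on differentiation of uniformly convergent series of $C^1$ functions; absolute convergence of the product series likewise justifies the rearrangement into the $\varphi_{k+m}=\varphi_k\varphi_m$ grouping. One should also check the boundary/initial consistency $T_{jk}(0)=B_{jk}$, which is built into the formulas for $T_{jk}$ and $T_{j,(0,0,0)}$ by construction. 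Assembling these observations, all of \eqref{11nb}--\eqref{77nb} hold, and Theorem \ref{thm1} then yields that \eqref{s.4} is a bona fide series solution of \eqref{sss}, completing the proof.
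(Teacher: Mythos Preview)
Your proposal is correct and follows essentially the same route as the paper: use the Lemma's bound on $|T_{ik}|$ together with the elementary estimate $\frac{k_j^{k_j-1}}{k_j!}\le e^{k_j}$ to get a uniform geometric majorant $Ce^{-\epsilon|k|}$ for each term, apply the Weierstrass $M$-test to obtain \eqref{11nb}--\eqref{22nb}, then observe that the extra polynomial factors arising from $T'_{jk}$, $k_iT_{jk}$, $k_jT_{4k}$, and the convolution $\eta_{ijk}$ (already controlled in the Lemma's proof via Corollaries \ref{cor2}--\ref{cor3}) are absorbed by the exponential decay, yielding \eqref{33nb}--\eqref{77nb}, and conclude by Theorem~\ref{thm1}. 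The only cosmetic difference is that the paper bounds directly on all of $\mathbb{R}_+^3\oplus[0,+\infty)$ (since $\varphi_k\le 1$ and $\exp(-\tfrac12 P_k(t))\le 1$ hold globally there), whereas you restrict to compacts first; either works.
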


\begin{proof} We only need to prove that the series \eqref{s.4} we obtain satisfies \eqref{11nb}- \eqref{77nb}.
 Note that $\frac{n^m}{m!}\leq e^{n},\ m,n=1,2,\cdots$, so we have
\begin{equation*}
\left\{
\begin{array}{l}
  |T_{ik}(t)\varphi_k|\leq \frac{1}{100}\exp\left(-\frac{1}{2}P_{k}(t)-|k|\epsilon\right)\varphi_k< \frac{1}{100}e^{-|k|\epsilon},\ \ \ \ k>(0,0,0),\  i=1,2,3,\\
  |T_{4k}(t)\varphi_k|\leq \frac{60}{10^4}\exp\left(-\frac{1}{2}P_{k}(t)-|k|\epsilon\right)\varphi_k< \frac{60}{10^4}e^{-|k|\epsilon},\ \ \ \  k>(0,0,0).
\end{array}
\right.
\end{equation*}
Hence the series \eqref{s.4} converges absolutely on $\mathbb{R}_+^3\oplus [0,+\infty)$.
    It means that the series \eqref{s.4} continuous on $\mathbb{R}_+^3\oplus [0,+\infty)$.
Furthermore, we can prove that
   \begin{align*}
      |T'_{ik}\varphi_k| & =\left|\sum\limits_{j=1}^{3}\ \sum\limits_{k^{[1]}+k^{[2]}=k}k^{[2]}_{j}T_{jk^{[1]}}T_{ik^{[2]}}
         +k_i T_{4k}\right||\varphi_k| \\
       & \leq \left(\left|\sum\limits_{j=1}^3B_{j,(0,0,0)}k_j\right||T_{ik}|+|Q_{ik}|\right)\varphi_k\\
       & \leq \ \left(\left|\sum\limits_{j=1}^3B_{j,(0,0,0)}k_j\right|+\frac{90|k|}{10^4}\right)\exp\left(-\frac{1}{2}P_{k}(t)-|k|\epsilon\right)\varphi_k,\quad \quad  i=1,2,3,\ k>(0,0,0),\\
       |k_j T_{4k}\varphi_k|& \leq \frac{60|k|}{10^4}\exp(-\frac{1}{2}P_{k}(t)-|k|\epsilon)\varphi_k,\quad \quad \quad  j=1,2,3,\ k>(0,0,0),\\
       |\eta_{ijk}\varphi_k|& \leq \frac{30|k|}{10^4}\exp(-\frac{1}{2}P_{k}(t)-|k|\epsilon)\varphi_k,\quad \quad \quad  i,j=1,2,3,\ k>(0,0,0).
    \end{align*}
    Similarly,  we can induce that the series \eqref{s.4} we obtain satisfies \eqref{33nb}-\eqref{77nb}.
    Therefore it is a solution of \eqref{sss} by Theorem \ref{thm1}.
\end{proof}

\section*{Acknowledgments}

The paper is supported by the Natural Science Foundation of China (no. 11371185) and the Natural Science Foundation of Inner Mongolia,
China (no. 2013ZD01).



\begin{thebibliography}{99}
\itemsep 2pt



\bibitem{lou1} S.Y. Lou, M. Jia, X.Y. Tang, F. Huang, Vortices, circumfuence, symmetry groups,
and Darboux transformations of the (2+1)-dimensional Euler equation, Phys. Rev. E
75 (2007) 056311.
\bibitem{lou2} S.Y. Lou, M. Jia, F. Huang, X.Y. Tang, B\"{a}cklund transformations, solitary waves,
conoid wave and Bessel wave of the (2+1)-dimensional Euler equation, Int. J. Theor.
Phys. 46 (2007) 2082-2095.
\bibitem{ma} T. Makino, Exact solutions for the compressible Euler equation, J. Osaka Sangyo Univ. Nat. Sci. 95 (1993) 21-35.
\bibitem{yu1} M.W. Yuen, Exact, rotational, infinite energy, bolwup solutions to the 3-dimentional Euler equations, Phys. Lett. A 375 (2011) 3107-3113.
\bibitem{yu2} M.W. Yuen, Self-similar solutions with elliptic symmetry for the compressible Euler and Navier-Stokes equations in $R^N$, Commun. Nonlinear Sci. Numer. Simul. 12 (2012) 4524-4528.
\bibitem{fan} E.G. Fan, M.W. Yuen, Similarity reductions and new nonlinear exact solutions for the 2D incompressible Euler equations, Phys. Lett. A 378 (2014) 623-626.
\bibitem{33} W. Rudin, Principles of mathematical analysis. New York, 1964.
\bibitem{ci}  J. Riordan, Combinatorial Identities, 1968.
\end{thebibliography}
\end{document}